\documentclass[a4paper,11pt]{amsart}
\pdfoutput=1 
\usepackage{amssymb, latexsym, amsmath, verbatim, amsthm, amscd}
\usepackage[shortcuts]{extdash}
\usepackage{microtype}
\usepackage{mathtools}
\usepackage{mathscinet}
\usepackage{subcaption}
\usepackage{tikz,color}
\usetikzlibrary{decorations.markings,arrows.meta}
\usepackage{graphicx}
\usepackage{hyperref}

\title[Short, highly imprimitive words]{Short, highly imprimitive words yield hyperbolic
  one-relator groups}

\author[Christopher H.\ Cashen]{Christopher H.\ Cashen\orcidA{}} 
\address{
Faculty of Mathematics,
University of Vienna,
1090 Vienna, Austria}
\email{\href{mailto:christopher.cashen@univie.ac.at}{christopher.cashen@univie.ac.at}}
\urladdr{\href{http://www.mat.univie.ac.at/~cashen}{\nolinkurl{http://www.mat.univie.ac.at/~cashen}}}
\urladdr{\href{https://orcid.org/0000-0002-6340-469X}{\nolinkurl{https://orcid.org/0000-0002-6340-469X}}}
\author[Charlotte Hoffmann]{Charlotte Hoffmann\orcidB{}}
\address{
Faculty of Mathematics,
University of Vienna,
1090 Vienna, Austria}
\email{\href{mailto:chahoffmann94@gmail.com}{chahoffmann94@gmail.com}}
\urladdr{\href{https://orcid.org/0000-0003-2027-5549}{\nolinkurl{https://orcid.org/0000-0003-2027-5549}}}
\keywords{one-relator group, hyperbolic group, imprimitivity rank}
\thanks{The first author is supported by the Austrian Science Fund (FWF): P~30487\=/N35.} 
\date{\today}

\hypersetup{
    pdftitle={Short, highly imprimitive words yield hyperbolic
  one-relator groups},    
    pdfauthor={Christopher H. Cashen and Charlotte Hoffmann},     
    pdfkeywords={one-relator group, hyperbolic group, imprimitivity rank}, 
    colorlinks=true,       
    linkcolor=black,          
    citecolor=black,        
    filecolor=black,      
    urlcolor=black           
}
\theoremstyle{plain}
\newtheorem{theorem}{Theorem}[section]
\newtheorem{lemma}{Lemma}[section]
\newtheorem{proposition}{Proposition}[section]
\newtheorem{corollary}{Corollary}[section]

\theoremstyle{remark}

\theoremstyle{definition}

\def\makeautorefname#1#2{\expandafter\def\csname#1autorefname\endcsname{#2}}
\let\fullref\autoref

\makeautorefname{theorem}{Theorem} 
\makeautorefname{lemma}{Lemma} 
\makeautorefname{proposition}{Proposition} 
\makeautorefname{corollary}{Corollary} 
\makeautorefname{definition}{Definition}
\makeautorefname{example}{Example}
\makeautorefname{section}{Section}
\makeautorefname{subsection}{Section}
\makeautorefname{subsubsection}{Section}
\makeautorefname{claim}{Claim}

\makeatletter 
\let\c@lemma=\c@theorem 
\makeatother
\makeatletter 
\let\c@proposition=\c@theorem 
\makeatother
\makeatletter 
\let\c@corollary=\c@theorem 
\makeatother
\makeatletter 
\let\c@definition=\c@theorem 
\makeatother
\makeatletter 
\let\c@example=\c@theorem 
\makeatother
\makeatletter
\@addtoreset{claim}{proposition}
\makeatother
\makeatletter
\@addtoreset{claim}{lemma}
\makeatother
\mathtoolsset{centercolon} 


\makeatletter
\newsavebox\myboxA
\newsavebox\myboxB
\newlength\mylenA

\newcommand*\xoverline[2][0.75]{%
    \sbox{\myboxA}{$\m@th#2$}%
    \setbox\myboxB\null
    \ht\myboxB=\ht\myboxA%
    \dp\myboxB=\dp\myboxA%
    \wd\myboxB=#1\wd\myboxA
    \sbox\myboxB{$\m@th\overline{\copy\myboxB}$}
    \setlength\mylenA{\the\wd\myboxA}
    \addtolength\mylenA{-\the\wd\myboxB}%
    \ifdim\wd\myboxB<\wd\myboxA%
       \rlap{\hskip 0.5\mylenA\usebox\myboxB}{\usebox\myboxA}%
    \else
        \hskip -0.5\mylenA\rlap{\usebox\myboxA}{\hskip 0.5\mylenA\usebox\myboxB}%
    \fi}
\makeatother


\newcommand{\F}{\mathbb{F}}
\DeclareMathOperator{\Aut}{Aut}

\newcommand{\llangle}{\langle\!\langle}
\newcommand{\rrangle}{\rangle\!\rangle}

\newcommand{\bp}{o}

\definecolor{lime}{HTML}{A6CE39}
\DeclareRobustCommand{\orcidicon}{%
	\begin{tikzpicture}
          \draw[lime, fill=lime] (0,0)
	circle [radius=0.16] 
	node[white] {{\fontfamily{qag}\selectfont \tiny ID}};
	\draw[white, fill=white] (-0.0625,0.095) 
	circle [radius=0.007];
	\end{tikzpicture}
	\hspace{-2mm}
}

\foreach \x in {A, ..., Z}{%
	\expandafter\xdef\csname orcid\x\endcsname{\noexpand\href{https://orcid.org/\csname orcidauthor\x\endcsname}{\noexpand\orcidicon}}
}


\makeatletter
\@namedef{subjclassname@2020}{%
  \textup{2020} Mathematics Subject Classification}
\makeatother
\subjclass[2020]{20F65, 20F67, 20F05}

\begin{document}
\begin{abstract}
We give experimental support for a conjecture of Louder and Wilton
saying that words of imprimitivity rank greater than two yield
hyperbolic one-relator groups.

\end{abstract}
\maketitle
\section{Introduction}
An element in a free group is \emph{primitive} if it is an element of
some basis, or free generating set.
Failure of primitivity can be quantified:
define the \emph{imprimitivity rank} of an element to be the minimal
rank of a subgroup containing it as an imprimitive element, if such a
subgroup exists, or infinite otherwise.
An element has imprimitivity rank 0 if and only if it is trivial, 1 if and
only if it is a proper power, and $\infty$ if and
only if it is a primitive element. 
In these cases the quotient of the free group by the subgroup
normally generated by the element is a hyperbolic
group, either a free group, in the first and third cases, or a
one-relator group with torsion, which is hyperbolic by
the B.\ B.\ Newman spelling theorem \cite{MR0222152}, in the second case.
Nonelementary, torsion-free two-generator one-relator groups have
relators of imprimitivity rank 2.
There are many nonhyperbolic
groups of this form, such as $\mathbb{Z}^2=\langle a,b\mid
aba^{-1}b^{-1}\rangle$, the Baumslag-Solitar
groups $BS(m,n)=\langle a,b\mid
ab^ma^{-1}b^{-n}\rangle$, and the groups considered by Gardam and
Woodhouse \cite{MR3876736}.
Louder and Wilton \cite[Theorem~1.4]{LouWil18a} show that
two-generated subgroups of a higher imprimitivity rank one-relator
group are free.
Thus, they are of Type $F$ and have no
Baumslag-Solitar subgroups.
It is a long-standing open question whether such groups must be hyperbolic. 
Louder and Wilton conjecture \cite[Conjecture~1.6]{LouWil18a} a positive answer for
one-relator groups.

We offer experimental support for their conjecture.
Fix a basis for a free group, so that a group element can be uniquely represented as a freely reduced word, a product of basis elements and their
inverses, of a well-defined length.
 \begin{theorem}\label{rank3}
    Let $w$ be a word in $\F_r$ of length $L$ and imprimivity
    rank not equal to 2. Then $\F_r/\llangle w\rrangle$ is hyperbolic
    if $r\leq 4$ and $L\leq 17$.
  \end{theorem}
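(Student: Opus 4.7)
The plan is to prove \fullref{rank3} by exhaustive computer search: enumerate all words of the prescribed length and rank, compute their imprimitivity ranks, and certify hyperbolicity of the resulting one-relator quotient whenever the rank differs from~$2$. The cases of imprimitivity rank $0$, $1$, and $\infty$ are disposed of by the introduction, so the nontrivial content is the subset of words whose rank is at least~$3$.

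Step~1 is word enumeration. Since cyclic conjugation, inversion, and the action of $\Aut(\F_r)$ all preserve both the imprimitivity rank of $w$ and the isomorphism type of $\F_r/\llangle w\rrangle$, it suffices to enumerate one representative of each orbit. I would list freely and cyclically reduced words in $\F_r$ for $r\in\{2,3,4\}$ of length at most~$17$, and cull using Whitehead's algorithm (which both tests $\Aut(\F_r)$-equivalence and produces a canonical Whitehead-minimal form) together with cyclic rotation and inversion. This keeps the list well within what modern computer algebra systems can handle.

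Step~2 is the imprimitivity rank computation. For each representative I would decide whether $w$ lies in some subgroup $H$ of rank $\leq 2$ as an imprimitive element; if not, the imprimitivity rank is at least~$3$. Framed as in Louder--Wilton this is a search for a Stallings graph of rank $\leq 2$ in which $w$ reads as a closed path and whose Whitehead graph detects non-primitivity of $w$. The search is finite because any such Stallings graph has at most $|w|$ edges.

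Step~3, the hyperbolicity certification, is the main obstacle. For each word $w$ surviving steps~1--2 with imprimitivity rank $\geq 3$, I would run a cascade of tests of increasing cost: classical small cancellation conditions (for example C$'$(1/6), C(6)--T(4)); explicit verification of a linear isoperimetric inequality on a sufficiently large ball in the Cayley $2$-complex; and, for recalcitrant cases, Derek Holt's \texttt{KBMAG} package to construct a shortlex automatic structure together with a word-difference machine certifying Gromov hyperbolicity. No single test is \emph{a priori} guaranteed to succeed on every input, so the real work is designing a pipeline that disposes of the bulk of cases cheaply and escalates only when necessary. By the Louder--Wilton theorem cited in the introduction, no word in the list can harbor a Baumslag--Solitar or $\mathbb{Z}^2$ subgroup, so one does not expect to find obstructions to hyperbolicity; if every word clears at least one test, \fullref{rank3} is established, while any survivor would be a candidate counterexample to the Louder--Wilton conjecture.
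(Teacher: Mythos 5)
Your proposal follows essentially the same strategy as the paper: enumerate one representative per $\Aut(\F_r)$-orbit of cyclic subgroups via Whitehead's algorithm (the paper adds an SLPCI$^\pm$ normal-form optimization to make this feasible), compute imprimitivity rank by a finite search over small Stallings graphs, and certify hyperbolicity by a cascade of cheap tests escalating to \texttt{kbmag}. The only differences are in the specific intermediate tests --- the paper's pipeline uses the Ivanov--Schupp criteria, the small cancellation conditions of Gersten--Short and Blufstein--Minian, and the \texttt{RSym} curvature test from the \texttt{walrus} package, which were needed to keep the \texttt{kbmag} workload manageable --- but these are implementation choices, not a different method.
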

 
  These results are achieved computationally, by a combination of efficient enumeration
  of representatives and brute force\footnote{We ran 12 x 4 core
    Intel Core i5-4670S @ 3.10GHz for two months.}.
  Details are in \fullref{sec:experiment}.

 We also observe that a well-known result about hyperbolicity of
 one-relator groups is consistent with the conjecture. In these
 results $|w|_a$ denotes the total number of occurrences of $a$ and
 $a^{-1}$ in $w$, where $w$ is freely reduced and $a$ is an element of the chosen basis.
 \begin{proposition}\label{prop:ISirank}
    The nonhyperbolicity criteria of Ivanov and Schupp \cite[Theorems~3~\&~4]{MR1401522}
    imply imprimitivity rank 2.
  \end{proposition}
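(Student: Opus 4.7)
The plan is to unpack the criteria of Ivanov and Schupp explicitly, and verify case by case that each relator they describe factors through a rank-two free subgroup of $\F_r$ as an imprimitive element.

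I would begin by recalling the classification. Theorems~3 and~4 of~\cite{MR1401522} assert that for a cyclically reduced two-generator relator $w$ with $|w|_a\leq 3$ (respectively the symmetric condition on $b$), the one-relator group fails to be hyperbolic only when $w$ is conjugate to a word in one of a short list of explicit families. In every family $w$ is written out as a product of $a$- and $b$-syllables exhibiting a Baumslag-Solitar, $\mathbb{Z}^2$, or similar subgroup of the quotient, and this presentation visibly factors as $w = W(U, V)$ for two specific elements $U, V \in \F_r$ (typically an $a$-syllable and a $b$-syllable, or conjugates thereof) and a specific word $W \in F(x,y)$ of commutator or $x^{m}y^{n}x^{-k}y^{-\ell}$ type.

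For the upper bound, in each family $w$ lies in $\langle U, V\rangle\leq\F_r$, a free subgroup of rank at most~$2$, and is imprimitive there because the corresponding $W$ is manifestly imprimitive in $F(x,y)$: it is a commutator, a product-of-powers of two generators, or otherwise visibly not part of any basis of $F(x,y)$. Thus the imprimitivity rank of $w$ is at most~$2$.

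For the lower bound I would rule out ranks $0$, $1$, and $\infty$ using only the non-hyperbolicity hypothesis. Rank~$0$ is impossible since $w\neq 1$. Rank~$1$ would mean $w$ is a proper power, in which case the B.\ B.\ Newman spelling theorem~\cite{MR0222152} makes $\F_r/\llangle w\rrangle$ hyperbolic, contradicting the Ivanov-Schupp hypothesis. Rank~$\infty$ would mean $w$ is primitive in $\F_r$, making the quotient a free group of rank $r-1$, again hyperbolic. Hence the imprimitivity rank is exactly~$2$.

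The main obstacle I expect is the bookkeeping: one must go through each Ivanov-Schupp family and identify the pair $(U,V)$ and the imprimitive $W \in F(x,y)$ that realize the factorization. This amounts to reading off from their non-hyperbolicity argument which pair of subwords is used to produce the Baumslag-Solitar or abelian subgroup, and noting that this same pair witnesses imprimitivity. Once that translation is made for every family the proposition is immediate.
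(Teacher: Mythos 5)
Your overall strategy is the same as the paper's: go through the Ivanov--Schupp families one by one, exhibit a pair $(U,V)$ generating a rank-two subgroup that contains $w$ as an imprimitive element, and rule out imprimitivity ranks $0$, $1$, and $\infty$ from nonhyperbolicity. (The paper presents the pairs $(U,V)$ as rank-two core graphs rather than as explicit generating sets, but that is only a difference of packaging; your explicit lower-bound paragraph is actually spelled out more fully than in the paper, which leaves it to the discussion in the introduction.)

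There is, however, a gap at the one step that carries the real content. You infer that $w=W(U,V)$ is imprimitive in $\langle U,V\rangle$ from the fact that $W$ is imprimitive in $F(x,y)$. That inference is only valid if $x\mapsto U$, $y\mapsto V$ is an isomorphism onto $\langle U,V\rangle$, i.e.\ if $\{U,V\}$ is a free basis of a genuinely rank-two subgroup; if the subgroup degenerates to rank $\le 1$, imprimitivity of $W$ tells you nothing about $w$. Verifying non-degeneracy is exactly the point the paper has to argue: it observes that because the hypotheses of \fullref{ISthm3} and \fullref{ISthm4} force all occurrences of $a^{\pm1}$ in $w$ to be the explicitly displayed ones, folding the labelled graph is a homotopy equivalence, so the subgroup really has rank two. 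Your proposal never addresses this, and it is not automatic (for instance, in the case $w=aua^{-1}v$ with $u,v$ conjugate into $\langle z\rangle$, one must use the placement of the letter $a$ to see that the two conjugating paths do not identify). Secondly, ``manifestly imprimitive'' oversells the case analysis: the resulting words in $F(x,y)$ include things like $\alpha^2\beta^m\alpha\beta^m$, whose imprimitivity is not visible from the abelianization and for which the paper must perform an explicit Whitehead reduction. Finally, a small misreading: Theorems~3 and~4 of \cite{MR1401522} concern words in $\F_r$ for arbitrary $r$ with few occurrences of a single basis letter $a$, not two-generator relators; this does not derail your plan, but the subwords $u$, $v$, $t$, $u_i$ are arbitrary words in the remaining generators, which is precisely why the rank-two (rather than rank-$r$) claim for $\langle U,V\rangle$ needs the argument above.
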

  The proposition is proven in \fullref{sec:IS}.
As a consequence, we have:
  \begin{corollary}\label{IS}
    Let $w$ be a cyclically reduced word in $\F_r$ of imprimitivity rank not equal to 2
    such that $0<|w|_a<4$ for some basis element $a$. Then
    $\F_r/\llangle w\rrangle$ is hyperbolic. 
  \end{corollary}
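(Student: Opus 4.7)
The plan is to prove the corollary by contrapositive, combining \fullref{prop:ISirank} with the fact that the Ivanov--Schupp theorems actually yield a complete hyperbolicity characterization in the low-exponent regime $|w|_a \leq 3$.

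First I would dispose of the trivial cases of imprimitivity rank $0$, $1$, and $\infty$. As recalled in the introduction, these correspond respectively to $w$ trivial, a proper power, or primitive, and in each case $\F_r/\llangle w\rrangle$ is known to be hyperbolic (either free or one-relator with torsion, where one invokes the B.\ B.\ Newman spelling theorem). So we may restrict attention to words of finite imprimitivity rank at least $3$.

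The main step is the following. The Ivanov--Schupp Theorems $3$ and $4$ are not merely sufficient conditions for nonhyperbolicity: when $0<|w|_a\leq 3$, together they give necessary and sufficient conditions. In other words, in this exponent range, $\F_r/\llangle w\rrangle$ fails to be hyperbolic \emph{if and only if} $w$ (or a cyclic conjugate thereof, possibly after a Nielsen transformation affecting only $a$) falls into one of the explicit families in Theorems $3$ and $4$. I would quote this equivalence from the Ivanov--Schupp paper. With it in hand, the corollary follows immediately: if $\F_r/\llangle w\rrangle$ is nonhyperbolic and $0<|w|_a<4$, then $w$ satisfies the Ivanov--Schupp criteria, so by \fullref{prop:ISirank} its imprimitivity rank equals $2$. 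The contrapositive is the statement of the corollary.

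The only potential obstacle is verifying that Ivanov--Schupp's conditions really are both necessary and sufficient for nonhyperbolicity when $|w|_a \leq 3$, rather than only sufficient; this is a matter of carefully reading Theorems $3$ and $4$ of \cite{MR1401522} and checking that the stated lists of nonhyperbolic relators are exhaustive in this range. Once that is confirmed, no further work is required beyond citing \fullref{prop:ISirank}.
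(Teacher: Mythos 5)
Your proposal is correct and is essentially the paper's own argument: the corollary is stated as an immediate consequence of Proposition~\ref{prop:ISirank} together with the fact that Ivanov--Schupp Theorem~3 is an ``if and only if'' characterization of nonhyperbolicity in the range $0<|w|_a<4$, so a nonhyperbolic example would have to satisfy one of their criteria and hence have imprimitivity rank~$2$. The only (harmless) imprecision is that Theorem~4 concerns $|w|_a=4$ and so is not needed here, and the exhaustiveness you worry about is already built into the statement of Theorem~3 as quoted.
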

  \begin{corollary}\label{ISbound}
    Let $w$ be cyclically reduced word in $\F_r$ of length less than $4r$ and
    imprimitivity rank not equal to 2 such that every generator or its
    inverse occurs in $w$.
    Then $\F_r/\llangle w\rrangle$ is hyperbolic.
  \end{corollary}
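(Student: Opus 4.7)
The plan is to reduce Corollary~\ref{ISbound} directly to Corollary~\ref{IS} via a pigeonhole argument on the per-generator letter counts of $w$. Since $w$ is cyclically reduced (and thus freely reduced), its length decomposes as
\[
|w| \;=\; \sum_{i=1}^{r} |w|_{a_i},
\]
where $a_1,\ldots,a_r$ is the chosen basis of $\F_r$ and each $|w|_{a_i}$ counts occurrences of both $a_i$ and $a_i^{-1}$. The hypothesis that every generator or its inverse occurs in $w$ means $|w|_{a_i}\geq 1$ for every $i$.

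Next I apply pigeonhole against the length bound. If we had $|w|_{a_i}\geq 4$ for every $i$, summing would give $|w|\geq 4r$, contradicting the assumption $|w|<4r$. Hence there must be some basis element $a$ with $1 \leq |w|_a \leq 3$, i.e.\ $0<|w|_a<4$. Combining this with the standing hypothesis that the imprimitivity rank of $w$ is not equal to $2$, Corollary~\ref{IS} immediately yields hyperbolicity of $\F_r/\llangle w\rrangle$.

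I do not foresee any real obstacle: the numerical threshold $4r$ is evidently calibrated so that, in the presence of every generator, pigeonhole forces some letter count into the range $\{1,2,3\}$ to which Corollary~\ref{IS} applies. The only care needed is to recall that $|w|_{a_i}$ tracks the generator together with its inverse, so that the occurrence hypothesis translates into $|w|_{a_i}\geq 1$ rather than $|w|_{a_i}\geq 2$; otherwise the corollary is little more than a repackaging of Corollary~\ref{IS}.
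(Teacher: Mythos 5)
Your proof is correct and is exactly the (implicit) derivation the paper intends: the length bound $|w|<4r$ together with $|w|_{a_i}\geq 1$ for all $i$ forces, by pigeonhole, some basis element $a$ with $0<|w|_a<4$, at which point \fullref{IS} applies verbatim. The one point of care you flag --- that $|w|_a$ counts $a$ and $a^{-1}$ together, so the occurrence hypothesis gives $|w|_a\geq 1$ --- is handled correctly.
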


Combining these results with our experimental results, we have:
  \begin{corollary} 
    Let $w$ be a word in $\F_r$ of length at most 17 and imprimitivity
    rank not equal to 2. Then $\F_r/\llangle w\rrangle$ is hyperbolic.
  \end{corollary}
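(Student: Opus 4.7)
The plan is to combine Theorem \ref{rank3} with Corollary \ref{ISbound}, patching together the two ranges of $r$ that each result handles.

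First I would reduce to the case where $w$ is cyclically reduced and every basis element of $\F_r$ actually appears (as itself or its inverse) in $w$. Cyclic reduction does not change the normal closure, hyperbolicity of the quotient, or the imprimitivity rank. If some basis element $a$ does not appear in $w$, then $\F_r/\llangle w \rrangle \cong \F_{r-1}/\llangle w \rrangle_{\F_{r-1}} * \langle a \rangle$ where $\F_{r-1}$ is the free group on the remaining basis elements containing $w$, and a free product of a hyperbolic group with an infinite cyclic group is again hyperbolic; so it suffices to establish the statement when $w$ uses every generator.

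Next I would split on $r$. If $r \leq 4$, Theorem \ref{rank3} applies directly to the hypotheses $L \leq 17$ and imprimitivity rank $\neq 2$. If instead $r \geq 5$, then $|w| \leq 17 < 20 \leq 4r$, so the hypotheses of Corollary \ref{ISbound} are met by the reduced form of $w$, and we again conclude that $\F_r/\llangle w \rrangle$ is hyperbolic.

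There is no real obstacle: the work has been done in the two results already cited, and the corollary is a bookkeeping argument that partitions the parameter space $(r,L)$ into the experimentally verified regime $r \leq 4$ and the Ivanov--Schupp regime $4r > L$, observing that these two regions cover every $r$ once $L \leq 17$ is fixed. The only mild care required is the initial reduction to the case that $w$ uses every basis element, so that the count of generators in Corollary \ref{ISbound} is honest.
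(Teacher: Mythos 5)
Your proposal is correct and follows essentially the same route as the paper: reduce to a free factor of $\F_r$ containing $w$ (the paper passes to the \emph{smallest} free factor via an automorphism, you simply drop the unused basis elements, which suffices here), note that the quotient splits as a free product with the complementary free factor, and then dispatch to \fullref{rank3} when that factor has rank at most $4$ and to \fullref{ISbound} when it has rank at least $5$, since then $17<4r$. The only detail worth making explicit is that the hypothesis ``imprimitivity rank $\neq 2$'' persists after restricting to the free factor, which holds because imprimitivity rank cannot decrease when passing to a free factor containing $w$ and the values $0$, $1$, $\infty$ (trivial, proper power, primitive) are likewise preserved.
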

  \begin{proof}
    $\F_r/\llangle w\rrangle$ is hyperbolic when the imprimitivity
    rank of $w$ is 0, 1, or $\infty$, so suppose it is finite and at least
    3.
    Up to replacing $w$ by an element in the same automorphic orbit,
    we may, without increasing the length of $w$, assume that it is
    cyclically reduced and that there is $s$ such that taking the
    first $s$ basis elements and the last $r-s$ basis elements gives a
    splitting  $\F_r=\F_s*\F_{r-s}$ where the $\F_s$
    factor is the smallest free factor containing $w$.
    Since $w$ is imprimitive in $\F_r$, it is imprimitive in $\F_s$,
    so $s$ is an upper bound on imprimitivity rank, which implies
    $s\geq 3$.
    Furthermore, since $\F_s$ is the smallest free factor containing
    $w$, all of the generators of $\F_s$ or their inverses occur in $w$.
    Since $\F_r/\llangle w\rrangle\cong (\F_s/\llangle w\rrangle) *
    \F_{r-s}$ is hyperbolic if and only if $\F_s/\llangle w\rrangle$
    is, 
    we conclude by applying \fullref{rank3} or
    \fullref{ISbound} to $\F_s/\llangle w\rrangle$, according to
    whether  $s<5$ or $s\geq 5$, respectively.
  \end{proof}

  \subsection*{Additional conjectures}
  In checking the hyperbolicity conjecture, we enumerated the
  $\Aut(\F_4)$ orbits of cyclic subgroups of $\F_4$ that have a
  representative that can be generated by a word of at most length 16.
  We also computed imprimitivity ranks for these words.
  Armed with this data, we can test other questions involving
  imprimitivity rank.
  We check two additional conjectures and find that they are consistent with the
  data up to length 16. The first of these concerns uniqueness of the
  subgroup in the definition of imprimitivity rank, see
  \fullref{uniqueWsubgroups}.
  The second concerns the relationship between imprimitivity rank and
  stable commutator length, see \fullref{scl}.

  \subsection*{Acknowledgements}
We thank Henry Wilton for his comments on an earlier draft, in
particular for the suggestion to check Heuer's conjecture.

\section{Preliminaries}
Fix a free group $\F_r$ with basis $X=(x_1,\dots,x_r)$.
Let $X^\pm :=\{x_1,\dots,x_r\}\cup\{x_1^{-1},\dots,x_r^{-1}\}$.
Write $f\sim g$ if $f$ and $g$ are conjugate.
The word length of $f$ with respect to $X$ is denoted $|f|$, and the
word length of the cyclic reduction of $f$ with respect to $X$, the
\emph{cylic length of $f$}, is
denoted $||f||$.

For our purposes, a finitely presented group is \emph{hyperbolic} if
there exists a linear function $\delta$ such that if $w$ is a freely
reduced word of length $n$ in the generators or their inverses that represents the
identity element of the group then it is possible to express $w$ as
the free reduction of a product of at most $\delta(n)$ conjugates of
relators or their inverses.
It turns out that while the precise function $\delta$ depends on the
choice of finite presentation, its linearity does not, so being
hyperbolic is a group property and not merely a property of a
presentation. 
More on hyperbolic groups can be found in any textbook on Geometric
Group Theory. 

Imprimitivity rank was introduced by Puder\footnote{Puder 
  uses the term `primitivity rank'. Louder and Wilton follow his
  terminology. We find it misleading. Compare, for
  instance, to the \emph{primitivity
    index} of \cite{MR3893306}, which is the smallest index of a
  subgroup for which the element 
  \emph{becomes primitive} upon lifting to that subgroup. } \cite{MR3265279}.

A \emph{Stallings graph} is a based, directed, connected, $X$--labelled
graph $(\Gamma,\bp)$ that is folded and core with respect to $\bp$.
The free group $\pi_1(\Gamma,\bp)$ is identified with a subgroup of
$\F_r$ via the labelling, and, in fact, Stallings graphs are in bijection with subgroups of $\F_r$.
See Kapovich and Myasnikov  \cite{MR1882114} for details.

The group $W_I$ of \emph{Whitehead automorphisms of the first kind}
are automorphic
extensions of maps defined on $X$ by  $x_i\mapsto x_{\sigma(i)}^{\epsilon_i}$ for $1\leq i\leq
r$, where $\sigma\in\mathrm{Sym}(r)$ and $\epsilon_i=\pm 1$.

The set $W_{II}$ of  \emph{Whitehead automorphisms of the second kind}
are automorphic extensions of maps defined on $X^\pm$ as follows.
Given an element $x\in X^{\pm}$ and a subset $Z\subset
X^\pm\setminus\{x,x^{-1}\}$ take the map that
 fixes $x$ and $x^{-1}$ and for $y\in X^\pm\setminus\{x,x^{-1}\}$ does:
\[y\mapsto
  \begin{cases}
    y &\text{ if }y,y^{-1}\notin Z\\
    xy&\text{ if }y\in Z\text{ and }y^{-1}\notin Z\\
    yx^{-1}&\text{ if }y\notin Z\text{ and }y^{-1}\in Z\\
    xyx^{-1}&\text{ if }y,y^{-1}\in Z
  \end{cases}\]
Together the Whitehead automorphisms generate $\Aut(\F_r)$.
Moreover, Whitehead \cite{Whi36} proves two stronger facts:
\begin{itemize}
\item Call a word $w\in\F_r$ \emph{Whitehead minimal} if there
  does not exist a Whitehead automorphism $\alpha$ such that
  $||\alpha(w)||<|w|$.
  An element has minimal length in its $\Aut(\F_r)$ orbit if
  and only if it is Whitehead minimal.

\item Define the \emph{Whitehead level--$L$ graph} to be the graph
  whose vertices are Whitehead minimal words of length $L$, where $w$
  and $v$ are connected by an edge if there exists a Whitehead
  automorphism $\alpha$ such that $v$ is the cyclic reduction of
  $\alpha(w)$. Then the partition of vertices by connected component
  in the Whitehead level--$L$ graph is the same as the partition by
  $\Aut(\F_r)$ orbits.
  \end{itemize}
  Combining these two facts gives Whitehead's Algorithm for determining
  if two words are in the same $\Aut(\F_r)$ orbit: they are if and
  only if their Whitehead minimal representatives have the same
  length, say $L$,
  and are contained in the same component of the Whitehead level--$L$
  graph. 
  In particular, a word represents a primitive element if and only if
  it  Whitehead reduces to a word of  length 1.

\section{The Ivanov-Schupp criteria}\label{sec:IS}
\begin{theorem}[{\cite[Theorem~3]{MR1401522}}]\label{ISthm3}
	Let $w$ be a freely and cyclically reduced word in $\F_r$ and suppose
        that for some basis element $a$, the total number of
        occurrences of $a$ and $a^{-1}$, $|w|_a$, satisfies
        $0<|w|_a<4$.
        The group $\F_r/\llangle w\rrangle$ is not hyperbolic if and only if one of the following holds up to cyclic permutation and taking inverses:
	\begin{enumerate}
		\item $|w|_a=2$, $w=auav$ and $uv^{-1}$ is a proper power in $\F_r$.
		\item $|w|_a=2$, $w=aua^{-1}v$ and either $u$ and $v$ are conjugate to powers of the same word in $\F_r$ or $u$ and $v$ are both proper powers in $\F_r$.
		\item $|w|_a=3$, $w=atauav$ and $ut^{-1}=z^{m}$, $vt^{-1}=z^{n}$ where $z$ is not a proper power, such that one of the following holds:
		\begin{enumerate}
			\item $\min(|m|,|n|)=0$ and $\max(|m|,|n|)>1$.\label{ISitem1}
			\item $\min(|m|,|n|)>0$ and $|m|=|n|\neq 1$. \label{ISitem2}
			\item $\min(|m|,|n|)>0$ and $m=-n$. \label{ISitem3}
			\item $\min(|m|,|n|)>0$ and $m=2n$ (or $n=2m$).\label{ISitem4}
		\end{enumerate}
		\item $|w|_a=3$, $w=ataua^{-1}v$ and $t^{-1}ut=z^{m}$, $v=z^{n}$ where $z$ is not a proper power and either $|m|=|n|$ or $m=-2n$ (or $n=-2m$). 
	\end{enumerate}
\end{theorem}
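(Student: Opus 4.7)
The plan is to analyse the one-relator group $\F_r/\llangle w\rrangle$ via its HNN or amalgamated product structure arising from Magnus\=/Karrass\=/Solitar rewriting along the distinguished generator $a$. Since $|w|_a\leq 3$, the relator can be rewritten as a short word in the conjugates $a^{-i}xa^i$ for $x\neq a$ and $i$ in a small range determined by the $a$-positions in $w$; this realises the quotient as an HNN extension (or amalgam) of a free group along Magnus subgroups generated by explicit subwords of $w$. The case $|w|_a=1$ reduces to a free product with a shorter one-relator factor, so the substantive cases are exactly $|w|_a\in\{2,3\}$, which match the four cases in the statement.

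For the sufficiency (``if'') direction, in each listed case I would exhibit an explicit non-hyperbolic subgroup of Baumslag\=/Solitar or $\mathbb{Z}^2$ type. For instance, in case (2) the relation $aua^{-1}=v^{-1}$ together with the assumption that $u,v$ are both conjugate to powers of a common root $z$ produces a relation of the form $\alpha z^m\alpha^{-1}=z^n$ in the quotient, giving a copy of $BS(m,n)$, or a $\mathbb{Z}^2$ when $|m|=|n|$. Case (1) is similar, and the alternative hypothesis in (2) that $u$ and $v$ are both proper powers yields a $BS$\=/subgroup by a slightly different combination. In (3) and (4), the two conjugacy constraints coming from the three occurrences of $a$ combine, and the four subcases of (3) together with the condition in (4) enumerate precisely the arithmetic situations in which this combination forces commutation of two powers of $z$ or a Baumslag\=/Solitar relation between them.

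For the necessity (``only if'') direction, I would show that when none of the listed conditions holds, the HNN/amalgam decomposition satisfies the hypotheses of the Bestvina\=/Feighn combination theorem: the free vertex groups are trivially hyperbolic, and the relevant edge subgroups are malnormal (or at least sufficiently acylindrically embedded) in the vertex groups. The content is that the Magnus subgroups associated to the subwords separating successive $a$-positions of $w$ meet their conjugates only trivially outside the excluded arithmetic cases; this is the combinatorial core of the argument and should be proved by analysing when a pinch of the HNN extension could be reduced, since exactly such a reduction corresponds to one of the listed numerical coincidences.

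The main obstacle is the case analysis for $|w|_a=3$, where two simultaneous conjugation constraints on elements of a free Magnus subgroup must be enumerated. Identifying precisely the Diophantine conditions (3)(a)--(d) and the condition of (4) that yield a $BS$ or $\mathbb{Z}^2$ subgroup, and in particular isolating the asymmetric boundary condition $m=2n$ (which produces $BS(1,2)$\=/type behaviour) from generic $(m,n)$ (which leaves a hyperbolic HNN extension), is the most delicate step: one must systematically examine when the cyclic rewriting of $w$ in the $a^{-i}xa^i$ alphabet exhibits coincident or proportional pairs of elements in a common cyclic subgroup of the Magnus vertex group.
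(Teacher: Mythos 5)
First, a point of orientation: the paper does not prove this statement at all---it is quoted verbatim from Ivanov and Schupp \cite[Theorem~3]{MR1401522} and used as a black box (the paper's own contribution here is only \fullref{prop:ISirank}, which shows the listed non-hyperbolic cases have imprimitivity rank $2$). So there is no in-paper proof to compare against; your proposal has to be measured against the original Ivanov--Schupp argument. On that score your outline does track the actual strategy in the literature: Magnus rewriting in the conjugates $a^{-i}xa^{i}$ to exhibit $\F_r/\llangle w\rrangle$ as an HNN extension or amalgam of free groups over Magnus subgroups, explicit Baumslag--Solitar or $\mathbb{Z}^2$ subgroups for the ``if'' direction, and the Bestvina--Feighn combination theorem (via malnormality/acylindricity of the edge groups) for the ``only if'' direction. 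One small inaccuracy: when $|w|_a=1$ the generator $a$ can be eliminated outright, so the quotient is free of rank $r-1$, not merely ``a free product with a shorter one-relator factor''; the conclusion (hyperbolic, hence consistent with the absence of an $|w|_a=1$ case in the list) is unaffected.

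The genuine gap is that, as written, this is a plan rather than a proof: the entire mathematical content of the theorem is the derivation of the precise arithmetic conditions (1)--(4), and your proposal explicitly defers that step (``the most delicate step''). In particular, for the ``only if'' direction you assert that outside the listed cases the edge subgroups ``meet their conjugates only trivially,'' but verifying the annuli-flare/malnormality condition is exactly where the conditions $|m|=|n|$, $m=-n$, $m=2n$, $m=-2n$, and the proper-power hypotheses on $z$, $u$, $v$ must be extracted; nothing in the proposal shows why these and only these coincidences obstruct the combination theorem, nor why, say, $\min(|m|,|n|)>0$ with $|m|\neq|n|$, $m\neq \pm 2n$, $n\neq \pm 2m$ forces hyperbolicity. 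There is also a logical subtlety you should flag: exhibiting a $BS(m,n)$ or $\mathbb{Z}^2$ subgroup certifies non-hyperbolicity (so the ``if'' direction is sound in outline), but the converse---that failure of all listed conditions implies hyperbolicity---cannot be argued by merely ruling out Baumslag--Solitar subgroups, since the equivalence of hyperbolicity with the absence of such subgroups in one-relator groups is precisely the open conjecture motivating this paper. The positive direction must therefore go through a constructive hyperbolicity criterion such as the combination theorem, and carrying that out case by case is the unfinished core of the argument.
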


\begin{theorem}[{\cite[Theorem~4~(3)]{MR1401522}}]\label{ISthm4}
	Let $w=au_{1}au_{2}au_{3}au_{4}$ be a freely and cyclically reduced word
        in $\F_r$, such that $|w|_a=4$ and the subwords $u_{i}$ are
        pairwise different.
        Then the group $\F_r/\llangle w\rrangle$ is not hyperbolic if and only if for some $i\in\{1,\dots,4\}$ the following holds (with subscripts modulo $4$): $$u_{i}u_{i+1}^{-1}u_{i+2}u_{i+3}^{-1}=1.$$
\end{theorem}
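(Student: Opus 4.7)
The plan is to prove the biconditional in two directions, using the theory of one-relator groups and a Magnus–Moldavanski type analysis keyed to the map $\sigma_a\from G\to \mathbb{Z}$ defined by $a\mapsto 1$ and $x\mapsto 0$ for $x\in X\setminus\{a\}$, where $G:=\F_r/\llangle w\rrangle$.

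For the forward direction (the displayed relation implies non-hyperbolicity), the goal is to exhibit an explicit $\mathbb{Z}^2$ or Baumslag–Solitar subgroup of $G$. After cyclic permutation assume the relation holds at $i=1$, so $u_1u_2^{-1}u_3u_4^{-1}=1$ in $\F_r$. Writing $p_j:=au_j\in G$, the relator becomes $p_1p_2p_3p_4=1$, while the extra hypothesis, after conjugation by $a^{-1}$, becomes $(p_1p_2^{-1})(p_3p_4^{-1})=1$. I would then combine these two identities in $\langle p_1,p_2,p_3,p_4\rangle$ to show that a suitable pair of elements (the natural candidates being $p_1p_2$ and $p_1p_2^{-1}$, or their $a$-conjugates) either commutes or satisfies a Baumslag–Solitar relation, obstructing hyperbolicity.

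For the reverse direction (no such $i$ implies hyperbolicity), the strategy is to exploit the Moldavanski decomposition of $G$ relative to $\sigma_a$: since $|w|_a=4$, the group $G$ is an HNN extension of a simpler one-relator group whose associated subgroups are generated by shifted copies of the $u_i$'s. I would show, by induction on Magnus hierarchy depth, that a non-hyperbolic Baumslag–Solitar or $\mathbb{Z}^2$ subgroup can arise only from a pinching/coincidence pattern among the $u_i$'s, and then verify that every such pattern produces, after cyclic rotation and bookkeeping of the four syllables, an identity of the form $u_iu_{i+1}^{-1}u_{i+2}u_{i+3}^{-1}=1$. Together with Bestvina–Feighn style combination for HNN extensions, this gives hyperbolicity in the absence of the displayed relation.

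The hard part will be the reverse direction. While the forward direction is essentially algebraic manipulation once one finds the right commuting pair, ruling out \emph{every} possible obstruction to hyperbolicity requires careful case analysis: one must track all ways overlaps among the cyclic rotations of $au_1au_2au_3au_4$ and its inverse can align, use the pairwise distinctness of the $u_i$'s to discard degenerate cases, and show that the remaining candidate relations collapse to (a cyclic conjugate of) $u_iu_{i+1}^{-1}u_{i+2}u_{i+3}^{-1}=1$. The delicate point is that subtle conjugacy coincidences among the $u_i$'s in $\F_r$ can mimic the displayed relation without producing it verbatim, and a precise combinatorial lemma is needed to show these coincidences always do imply the relation.
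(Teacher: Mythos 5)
First, a point of comparison: the paper does not prove this statement. \fullref{ISthm4} is quoted verbatim from Ivanov and Schupp \cite[Theorem~4~(3)]{MR1401522}; the only nearby argument in the paper is the proof of \fullref{prop:ISirank}, which \emph{assumes} the theorem and uses the displayed identity to show that $w$ has imprimitivity rank $2$. So there is no in-paper proof to measure your proposal against, and what you have written is a plan for reproving Ivanov--Schupp, most of whose substance is deferred.

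On the forward direction: your setup is workable and can be pushed through, but not with the pair you name and not without extra input. Writing $p_j=au_j$, the relator gives $p_1p_2p_3p_4=1$ in $G$ while the hypothesis gives $p_1p_2^{-1}p_3p_4^{-1}=1$ already in $\F_r$; eliminating $p_4$ yields $p_2\,(p_3p_1)\,p_2^{-1}=(p_3p_1)^{-1}$, a Klein-bottle relation on $Q:=p_3p_1$ and $p_2$ (your candidates $p_1p_2$, $p_1p_2^{-1}$ do not obviously satisfy such a relation). The gap is the phrase ``obstructing hyperbolicity'': a hyperbolic group \emph{can} contain an infinite-order element conjugated to its inverse (infinite dihedral subgroups), so you must additionally show that $G$ is torsion-free (true, since the $u_i$ pairwise distinct forces $w$ not to be a proper power), that $Q\neq 1$ in $G$ (e.g.\ via the exponent-sum-mod-$4$ homomorphism in $a$), and that $\langle Q,p_2\rangle$ is not cyclic; only then does the relation contradict the virtually cyclic structure of normalizers of infinite cyclic subgroups in a torsion-free hyperbolic group.

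On the reverse direction there is a concrete error and then the real gap. The map $\sigma_a$ does not descend to $G$: the exponent sum of $a$ in $w$ is $4\neq 0$, so there is no Moldavanski HNN splitting of $G$ ``relative to $\sigma_a$'' with stable letter $a$; one must either use a generator of exponent sum zero or first perform the standard embedding/change-of-generators trick, and the associated subgroups one obtains are then not simply ``shifted copies of the $u_i$''. More seriously, the logic ``a non-hyperbolic $BS$ or $\mathbb{Z}^2$ subgroup can arise only from a coincidence pattern, hence no relation implies hyperbolic'' conflates non-hyperbolicity with containing a Baumslag--Solitar subgroup, which for one-relator groups is exactly the open problem motivating this paper; the viable route is the one you mention last, namely verifying the hypotheses of a Bestvina--Feighn combination theorem for the corrected splitting and showing that the annuli/acylindricity condition fails precisely when some $u_iu_{i+1}^{-1}u_{i+2}u_{i+3}^{-1}=1$. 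That verification is the entire content of the Ivanov--Schupp theorem, and your proposal leaves all of it to ``a precise combinatorial lemma'' that is not stated or proved.
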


We check that nonhyperbolicity in these theorems implies
imprimitivity rank 2:
\begin{proof}[Proof of \fullref{prop:ISirank}]
Suppose $w$ is of one of the forms in \fullref{ISthm3} and
\fullref{ISthm4}.
For each case we exhibit a connected, based, rank 2 core graph with edges labelled by
words in $\F_r$ in which (a conjugate of) $w$ labels a
imprimitive element of the fundamental group.
By subdividing edges we can arrange that edges are labelled by basis
elements.
The graphs are not necessarily folded, but from the hypothesis in
\fullref{ISthm3} and \fullref{ISthm4} that the only occurrences of
$a^{\pm 1}$ are the explicit ones, it follows that in all of our
examples folding will be a homotopy equivalence, so these graphs
really do represent rank 2 subgroups.

In each of the figures the larger dot marks the base vertex, the
triangular arrows mark a choice of edges in a maximal
subtree, and the edges with the single and double arrows mark edges
whose unique completion through the maximal subtree to a based loop
represent 
generators $\alpha$ and $\beta$, respectively, of the fundamental
group of the graph.

First, let $|w|_a=2$ and $w=auav$ such that $uv^{-1}=x^{n}$ with $n>1$.
Then $w\sim (va)^{2}x^{n}\cong \alpha^2\beta^n$ is
imprimitive in \fullref{IS12A}.
	
	Now assume that $|w|_a=2$, $w=aua^{-1}v$ and
        $u=s_{1}^{-1}x^{m}s_{1}$ and $v=s_{2}^{-1}x^{n}s_{2}$.
        Note that $\min(|m|,|n|)>0$, since otherwise $w$ fails to be
        either freely or cyclically reduced, so $w\cong \alpha\beta^m\alpha^{-1}\beta^n$
        is imprimitive in \fullref{IS12B}.
        If $u=u_{0}^{m}$ and $v=v_{0}^{n}$ with $\min(m,n)>1$ then $w\cong \alpha^m\beta^n$ is imprimitive 
        in \fullref{IS12C}.
	
\begin{figure}
  \begin{subfigure}[c]{0.32\linewidth}
    \centering
     \begin{tikzpicture}[font=\small]
    \draw[decoration={markings, mark=at position 0.5 with {\arrow{>>}}},postaction={decorate}](.5,.5) circle (0.5);
   \node[shape=circle,fill=black,scale=.25] (bp) at (2,.5) {};
   \node[shape=circle,fill=black,scale=.5] (v) at (1,.5) {};
   \node[left] () at (0,.5) {$x$};
   \begin{scope}[decoration={markings,mark=at position 0.5 with {\arrow{Triangle}}}] 
      \draw[postaction={decorate}] (v) .. controls ++(83:.666) and ++(97:.666)
    .. node[above]{$v$} (bp);
   \end{scope}
   \begin{scope}[decoration={markings,mark=at position 0.5 with {\arrow{>}}}] 
      \draw[postaction={decorate}] (bp) .. controls ++(263:.666) and ++(277:.666)
    .. node[above]{$a$} (v);
   \end{scope}
 \end{tikzpicture}
   \caption{}\label{IS12A}
 \end{subfigure}
 \hfill
\begin{subfigure}[c]{0.32\linewidth}
  \centering
  \begin{tikzpicture}[font=\small]
     \draw[decoration={markings, mark=at position 0.5 with {\arrow{>>}}},postaction={decorate}](.5,.5) circle (0.5);
    \node[shape=circle,fill=black,scale=.25] (v1) at (2,0) {};
    \node[shape=circle,fill=black,scale=.5] (v2) at (2,1) {};
    \node[shape=circle,fill=black,scale=.25] (v0) at (1,.5) {};
    \node[left] () at (0,.5) {$x$};
    \begin{scope}[decoration={markings,mark=at position 0.5 with {\arrow{Triangle}}}] 
      \draw[postaction={decorate}] (v0) -- node[above]{$s_2$} (v2);
      \draw[postaction={decorate}] (v0) -- node[below]{$s_1$} (v1);
   \end{scope}
   \begin{scope}[decoration={markings,mark=at position 0.5 with {\arrow{>}}}] 
      \draw[postaction={decorate}] (v2) -- node[right]{$a$} (v1);
   \end{scope}
 \end{tikzpicture}
   \caption{}\label{IS12B}
 \end{subfigure}
 \hfill
\begin{subfigure}[c]{0.32\linewidth}
  \centering
\begin{tikzpicture}[font=\small]
     \draw[decoration={markings, mark=at position 0.5 with
       {\arrow{>}}},postaction={decorate}](.5,.5) circle (0.5);
     \draw[decoration={markings, mark=at position 0 with {\arrow{>>}}},postaction={decorate}](2.5,.5) circle (0.5);
    \node[shape=circle,fill=black,scale=.25] (v1) at (1,.5) {};
    \node[shape=circle,fill=black,scale=.5] (v0) at (2,.5) {};
    \node[left] () at (0,.5) {$u_0$};
    \node[right] () at (3,.5){$v_0$};
    \begin{scope}[decoration={markings,mark=at position 0.5 with {\arrow{Triangle}}}]
    \draw[postaction={decorate}] (v0) -- node[below]{$a$} (v1);
    \end{scope}
  \end{tikzpicture}
  \caption{}\label{IS12C}
\end{subfigure}
\caption{Graphs for $|w|_a=2$ in the proof of
 \fullref{prop:ISirank}.}\label{fig:IScase12}
\end{figure}

	Let now $|w|_a=3$, $w=atauav$ with $ut^{-1}=z^{m}$ and 
        $vt^{-1}=z^{n}$, where $z$ is not a proper power and $m,n$
        satisfy one of the conditions (\ref{ISitem1})-(\ref{ISitem4})
        in \fullref{ISthm3}.
      
        Suppose in case \eqref{ISitem1} we have $m=0$ and $n>1$, other
        variations of this case being similar.
        Then $w\cong \alpha^3\beta^n$ is imprimitive in \fullref{IS34A}.

        In case \eqref{ISitem3}, $w\cong
        \alpha^2\beta\alpha\beta^{-1}$ is imprimitive 
        in \fullref{IS34B}.

        In the subcase $m=n$ of case \eqref{ISitem2} that is not
        covered by case \eqref{ISitem3}, we may assume $m>1$ by
        replacing $z$ with $z^{-1}$, if necessary.
        Then $w\cong \alpha^2\beta^m\alpha\beta^m$ in \fullref{IS34A}.
        This word admits a Whitehead reduction $\alpha^{-1}\mapsto
        \beta\alpha^{-1}$, which sends the $w$--loop to
        $\alpha\beta^{-1}\alpha\beta^{2(m-1)}$.
        Since $m>1$, this word is Whitehead minimal, so the $w$--loop
        is imprimitive.

        In case \eqref{ISitem4} assume $n=2m$, the other case being
        similar.
        Then $w\cong \alpha^2\beta\alpha\beta^2$ is imprimitive
        in \fullref{IS34B}.
	
	Next, consider the case $|w|_a=3$, $w=ataua^{-1}v$ and
        $t^{-1}ut=z^{m}$, $v=z^{n}$ where $z$ is not a proper power.
        Again we can assume that $|m|,|n|>0$ since otherwise $|w|_a$
        would be less than $3$.
        Then $w=(at)^2z^{m}(at)^{-1}z^{n}$.
        Consider \fullref{IS34C}.
        If $|m|=|n|$ then $w\cong
        \alpha^2\beta\alpha^{-1}\beta^{\pm 1}$.
        If $n=-2m$ then $w\cong\alpha^2\beta\alpha^{-1}\beta^{-2}$.
        In all three cases $w$ is
        imprimitive.
        If $m=-2n$ then $w\cong\alpha^2\beta^2\alpha^{-1}\beta^{-1}$ is
        imprimitive in the graph obtained from \fullref{IS34C} by
        relabelling the $\beta$ edge with $z^{-n}$.

\begin{figure}
\begin{subfigure}[c]{0.31\linewidth}
 \centering
 \begin{tikzpicture}[font=\small]
    \draw[decoration={markings, mark=at position 0.5 with {\arrow{>>}}},postaction={decorate}](.5,.5) circle (0.5);
   \node[shape=circle,fill=black,scale=.25] (bp) at (2,.5) {};
   \node[shape=circle,fill=black,scale=.5] (v) at (1,.5) {};
   \node[left] () at (0,.5) {$z$};
   \begin{scope}[decoration={markings,mark=at position 0.5 with {\arrow{Triangle}}}] 
      \draw[postaction={decorate}] (v) .. controls ++(83:.666) and ++(97:.666)
    .. node[above]{$t$} (bp);
   \end{scope}
   \begin{scope}[decoration={markings,mark=at position 0.5 with {\arrow{>}}}] 
      \draw[postaction={decorate}] (bp) .. controls ++(263:.666) and ++(277:.666)
    .. node[below]{$a$} (v);
   \end{scope}
 \end{tikzpicture}
 \caption{}\label{IS34A}
\end{subfigure}
\begin{subfigure}[c]{0.31\linewidth}
 \centering
 \begin{tikzpicture}[font=\small]
    \draw[decoration={markings, mark=at position 0.5 with {\arrow{>>}}},postaction={decorate}](.5,.5) circle (0.5);
   \node[shape=circle,fill=black,scale=.25] (bp) at (2,.5) {};
   \node[shape=circle,fill=black,scale=.5] (v) at (1,.5) {};
   \node[left] () at (0,.5) {$z^m$};
   \begin{scope}[decoration={markings,mark=at position 0.5 with {\arrow{Triangle}}}] 
      \draw[postaction={decorate}] (v) .. controls ++(83:.666) and ++(97:.666)
    .. node[above]{$t$} (bp);
   \end{scope}
   \begin{scope}[decoration={markings,mark=at position 0.5 with {\arrow{>}}}] 
      \draw[postaction={decorate}] (bp) .. controls ++(263:.666) and ++(277:.666)
    .. node[below]{$a$} (v);
   \end{scope}
 \end{tikzpicture}
 \caption{}\label{IS34B}
\end{subfigure}
\begin{subfigure}[c]{0.31\linewidth}
  \centering
 \begin{tikzpicture}[font=\small]
    \draw[decoration={markings, mark=at position 0.5 with {\arrow{>>}}},postaction={decorate}](.5,.5) circle (0.5);
   \node[shape=circle,fill=black,scale=.5] (bp) at (1,.5) {};
   \node[shape=circle,fill=black,scale=.25] (v) at (2,.5) {};
   \node[left] () at (0,.5) {$z^m$};
   \begin{scope}[decoration={markings,mark=at position 0.5 with {\arrow{Triangle}}}] 
      \draw[postaction={decorate}] (v) .. controls ++(97:.666) and ++(83:.666)
    .. node[above]{$t$} (bp);
   \end{scope}
   \begin{scope}[decoration={markings,mark=at position 0.5 with {\arrow{>}}}] 
      \draw[postaction={decorate}] (bp) .. controls ++(277:.666) and ++(263:.666)
    .. node[below]{$a$} (v);
   \end{scope}
 \end{tikzpicture}
 \caption{}\label{IS34C}
\end{subfigure}
	\caption{Graphs for $|w|_a=3$ in the proof of \fullref{prop:ISirank}.}\label{fig:IScase34}
	\end{figure}
	
Finally, let $w=au_{1}au_{2}au_{3}au_{4}$ be as in \fullref{ISthm4}. 
We may assume that
        $u_{1}u_{2}^{-1}u_{3}u_{4}^{-1}=1$, so
        $w=au_{1}au_{2}au_{3}au_{1}u_{2}^{-1}u_{3}\cong
        \alpha\beta\alpha\beta^{-1}$ is imprimitive 
        in \fullref{fig:IScase5}.
 \begin{figure}
   \centering
   \begin{tikzpicture}[font=\small]
     \node[shape=circle,fill=black,scale=.25] (v0) at (0,0) {};
     \node[shape=circle,fill=black,scale=.25] (v1) at (1,0) {};
     \node[shape=circle,fill=black,scale=.25] (v2) at (1,1) {};
     \node[shape=circle,fill=black,scale=.5] (v3) at (0,1) {};
     \begin{scope}[decoration={markings,mark=at position 0.5 with
        {\arrow{Triangle}}}] 
        \draw[postaction={decorate}] (v0) -- node[below] {$u_1$} (v1);
        \draw[postaction={decorate}] (v2) -- node[above] {$u_3$}  (v3);
        \draw[postaction={decorate}] (v3) -- node[left] {$a$}  (v0);
      \end{scope}
      \begin{scope}[decoration={markings,mark=at position 0.5 with {\arrow{>}}}] 
        \draw[postaction={decorate}] (v1) -- node[right] {$a$}  (v2);
      \end{scope}
      \begin{scope}[decoration={markings,mark=at position 0.5 with {\arrow{>>}}}] 
        \draw[postaction={decorate}] (v2) .. controls ++(-7:.666) and ++(7:.666)
    .. node[right]{$u_2$} (v1);
      \end{scope}
   \end{tikzpicture}
		\caption{A  graph for $|w|_a=4$ in the proof of \fullref{prop:ISirank}.}\label{fig:IScase5}
	\end{figure}
\end{proof}

\section{The experiments}\label{sec:experiment}
\nocite{ipython}

To prove \fullref{rank3}, the idea is to enumerate
words of each length in the given free group, compute their
imprimitivity ranks, and for those of imprimitivity rank not equal to
two, test to see if
the resulting one-relator presentation is a hyperbolic group. 

\subsection{Enumerating words/groups}
For $w\in\F_r$ an automorphism $\alpha\in\Aut(\F_r)$ induces an
isomorphism between $\F_r/\llangle
w\rrangle$ and $\F_r/\llangle \alpha(w)^{\pm 1}\rrangle$.
Call these the `obvious' isomorphisms between one-relator groups.
To enumerate isomorphism types of one-relator groups it suffices to
enumerate one generator of one
representative of each automorphic orbit of cyclic subgroup.
There is a canonical choice of such an element: we choose the one that
is shortlex minimal with respect to the integer lexicographic order;
that is, if $(a_1,\dots,a_r)$ is our fixed ordered basis for $\F_r$,
we declare $a_r^{-1}<a_{r-1}^{-1}<\dots < a_1^{-1}<a_1<\dots <a_r$ and
extend to a shortlex ordering on reduced words.
There are examples of McCool and Pietrowski
\cite{MR0399269} that show that not all isomorphisms between
one-relator groups are obvious, so our enumeration has some
redundancies at the level of isomorphism type of one-relator groups.
However, work of Kapovich and Schupp \cite{MR2107437} and Kapovich,
Schupp, and Shpilrain \cite{MR2221020}, says that there is a generic
set of one-relator groups for which the only isomorphisms are the
obvious ones, so the redundancies are rare, in a specific quantifiable
sense. 

A naive algorithm for enumerating representatives of length $L$ is to simply
construct the Whitehead level--$L$ graph.
Additionally, since we are interested in cyclic subgroups and not just
elements, we connect every vertex $v$ to the vertex $v^{-1}$.
Then choose the shortlex minimal word in each component.

We speed this algorithm up as follows.
Permutation of generators and inversion of
generators and conjugation by a generator are in $\Aut(\F_r)$.
Define the \emph{PCI class} of a word to be those words that can be
reached from it by a finite chain of \emph{P}ermutation of generators,
\emph{C}yclic permutation, or \emph{I}nversion of
generators.
Similarly, the \emph{PCI$^\pm$ class} is those words that can be reached
by the above operations plus group inversion.  
Define a word to be \emph{SLPCI$^{(\pm)}$ minimal} if it is
\emph{S}hort\emph{L}ex minimal in its PCI$^{(\pm)}$ class.
Notice that if we start with a cyclically reduced word then none of
the above operations change the length of the word.

\begin{lemma}\label{slpci}
 $\Aut(\F_r)$ equivalence classes of cyclic subgroup such that the
 minimal generator length of a representative has length $L$ are in
 bijection with connected components of the length--$L$  \emph{SLPCI$^\pm$ graph}:
 the graph whose vertices are
 freely and cyclically reduced words of $\F_r$ of length $L$ that are both
 Whitehead and SLPCI$^\pm$
 minimal, and where two vertices $u$
 and $v$ are connected by an edge if there exists an element
 $\alpha\in W_{II}$ such that $v$ is the 
 SLPCI$^\pm$ minimal representative of $\alpha(u)$.
\end{lemma}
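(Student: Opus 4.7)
The plan is to identify the SLPCI$^\pm$ graph $S_L$ as a quotient of a slight augmentation of the Whitehead level--$L$ graph $G$ from the preliminaries. Let $G^\pm$ be $G$ together with an additional edge from each vertex $w$ to $w^{-1}$. Since Whitehead minimality is invariant under group inversion (inversion preserves length and commutes with every Whitehead automorphism) and $\langle w\rangle=\langle w^{-1}\rangle$, Whitehead's theorem gives that the connected components of $G^\pm$ are in bijection with $\Aut(\F_r)$-equivalence classes of cyclic subgroups whose minimum generator length is~$L$.

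I would then check that every PCI$^\pm$ operation is realized by $\Aut(\F_r)$ combined with group inversion: permutations and inversions of generators are exactly the $W_I$ Whitehead automorphisms; cyclic permutation by a basis letter $x$ is the $W_{II}$ automorphism determined by $x$ and $Z=X^\pm\setminus\{x,x^{-1}\}$; and group inversion is handled by the new edges in $G^\pm$. Consequently PCI$^\pm$-equivalent vertices lie in a common component of $G^\pm$, and SLPCI$^\pm$-minimality picks one canonical vertex from each PCI$^\pm$ class of length-$L$ Whitehead-minimal cyclically reduced words. In particular, the vertex set of $S_L$ is in bijection with these PCI$^\pm$ classes.

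The final step is to verify that retaining only $W_{II}$ edges in $S_L$ suffices. Edges in $G^\pm$ coming from $W_I$ automorphisms or from inversion already lie within a single PCI$^\pm$ class and become self-loops after passing to SLPCI$^\pm$ representatives. Given a $W_{II}$ edge $w_2=\alpha(w_1)$ in $G^\pm$ with SLPCI$^\pm$ representatives $u$ and $v$, write $w_1=P(u)$ for a composition $P$ of PCI$^\pm$ operations and commute $P$ past $\alpha$, using (i) that $W_I$ normalizes $W_{II}$ under conjugation, (ii) that group inversion commutes with every automorphism, and (iii) that any conjugation of a cyclically reduced word yields, after cyclic reduction, a cyclic permutation of it. The resulting PCI$^\pm$ transformations are absorbed into the SLPCI$^\pm$ reduction, producing $\beta\in W_{II}$ with $v$ the SLPCI$^\pm$ representative of $\beta(u)$; this is precisely an $S_L$ edge. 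This commutation calculation is the main technical obstacle, but once it is in place the bijection of connected components is immediate.
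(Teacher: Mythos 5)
Your proposal is correct and follows essentially the same route as the paper: both arguments reduce to the observation that $W_I$ normalizes $W_{II}$ under conjugation, together with the facts that group inversion commutes with automorphisms and that conjugation of a cyclically reduced word is absorbed as a cyclic permutation, so that a Whitehead-graph edge between two words descends to a $W_{II}$ edge between their SLPCI$^\pm$ representatives. The paper phrases this as a direct comparison of the two partitions rather than as a quotient of the augmented level graph, but the content of the commutation step is identical.
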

Khan \cite{MR2077762} used a similar construction, without inversion, to study the
complexity of Whitehead's Algorithm in the special case
$r=2$.
\begin{proof}
  Whitehead's result shows that the partition by components of the
  Whitehead level graph is the same as the partition by
  $\Aut(\F_r)$--orbits.
  It is clear from the definitions that two words in the same
  component of the length--$L$ SLPCI$^\pm$ graph are in the same component of the
  Whitehead level--$L$ graph.
  We show the opposite.
  The essential observation is that $W_I$ acts by conjugation on the set $W_{II}$.

  Elements that differ by an element of $W_I$ are in the same PCI
  class, so suppose $\alpha\in W_{II}$ and $\alpha(u_1)=u_2$ where
$u_i=a_iv_ia_i^{-1}$ with $v_i$ cyclically reduced, and suppose
$\sigma_i(v_i^{\epsilon_i})=w_i$ is the SLPCI$^\pm$ minimal
representative of $v_i$, where $\sigma_i\in W_I$ and $\epsilon_i\in\pm
1$.
Let $\alpha':=\sigma_1\circ\alpha\circ\sigma_1^{-1}$.
Since $\alpha'$ is a $W_I$ conjugate of an element of $W_{II}$, 
$\alpha'\in W_{II}$.
Thus there is an element of $W_{II}$ that takes $w_1$ to $\alpha'(w_1)\sim\sigma_1(u_2^{\epsilon_1})$, which
is in the same
PCI$^\pm$ class as $u_2$, so
$w_2$ is the SLPCI$^{\pm}$ minimal representative of $\alpha'(w_1)$.
\end{proof}

The lemma says we can run the naive algorithm but instead of enumerating
all words of a fixed length, it's enough to enumerate SLPCI$^\pm$
minimal ones.
This is a benefit because  SLPCI$^\pm$  minimality is falsifiable by a
subword: if $w$ is a word that contains a prefix $p$ and a subword $v$
of equal length such that there is a $W_I$ automorphism that
takes $v$ or $v^{-1}$ to a word that lexicographically precedes
$p$, then $w$ is not SLPCI$^\pm$ minimal.
We enumerate words of a fixed length by an odometer and check for such
subwords $v$.
If we find such a subword then we increment the odometer
\emph{at the rightmost position of $v$}.
This potentially allows us to skip over large ranges of words that
do not contain any  SLPCI$^\pm$ minimal words. 

As the wordlength grows and exponential growth in the free group
builds up steam, it impractical to hold the
entire SLPCI$^\pm$ graph in memory.
Instead, for each SLPCI$^\pm$ and Whitehead minimal word $w$ we
start constructing its graph component as described in  \fullref{slpci}.
If in this construction we encounter a shortlex predecessor then we
throw $w$ away and proceed to the next candidate.
If no such element occurs then $w$ is minimal in its component.
This procedure would be most effective if the SLPCI$^\pm$ graph
consists of many small components, and if in each component it is easy
to verify whether or not a given word is the shortlex minimal one.
Unfortunately for the latter case, there do exist examples of
components with shortlex local minima. 
For example, here is a component of the graph in rank 2 at length 9
(Capitalization indicates inversion, and the base ordering is $B<A<a<b$.) that contains a word $w:=BBABBAAbA$
that is a shortlex local minimum but not
the global minimum:
\[BBABBAAbA-BBABAbAbA-BBBABBAAA\]
So, to verify that $w$ is not the global minimum in its component we have
to construct the entire component.
That is easy in this example because the component is small.
It turns out that most components are small.
\fullref{fig:histo} shows the observed number of components of each
size in rank 3 at length 15.
In this example $99\%$ of the components have size at most 14.

For all\footnote{The
  formula for the size of the component containing
$C^{L-8}BCACaBAA$ has been confirmed up to $L= 25$, but we have not computed the full component frequency
distribution for $L>15$.} $11\leq L\leq 15$ the component frequency
plot looks much like \fullref{fig:histo}, with most
values clustered left and one prominent spectrum at multiples of
$\frac{1}{2}((L-7)^2+11(L-7)+30)$, with a unique largest component
of size $\frac{L-7}{2}((L-7)^2+11(L-7)+30)$ represented by $C^{L-8}BCACaBAA$.

Myasnikov and Shpilrain \cite{MR2015300} proved that components of the
Whitehead level--$L$ graph in rank $r=2$ have size bounded by a
polynomial of degree $2r-2$
in $L$, see also \cite{MR2077762, MR3364219}, and conjectured that this
should be true in higher ranks (see the
conjecture and discussion following \cite[Corollary~1.2]{MR2015300}).
The conjecture has been proven in some cases with additional technical
hypotheses \cite{MR2230319,MR2266870}.
Myasnikov and Shpilrain also, citing experimental evidence, give a specific quartic polynomial for
rank 3 bounding the size of the largest component, and a
representative of that component. 
Their representative is in the same $\Aut(\F_3)$--orbit as $C^{L-8}BCACaBAA$. 

\begin{figure}[h]
  \centering
  \includegraphics[width=\textwidth]{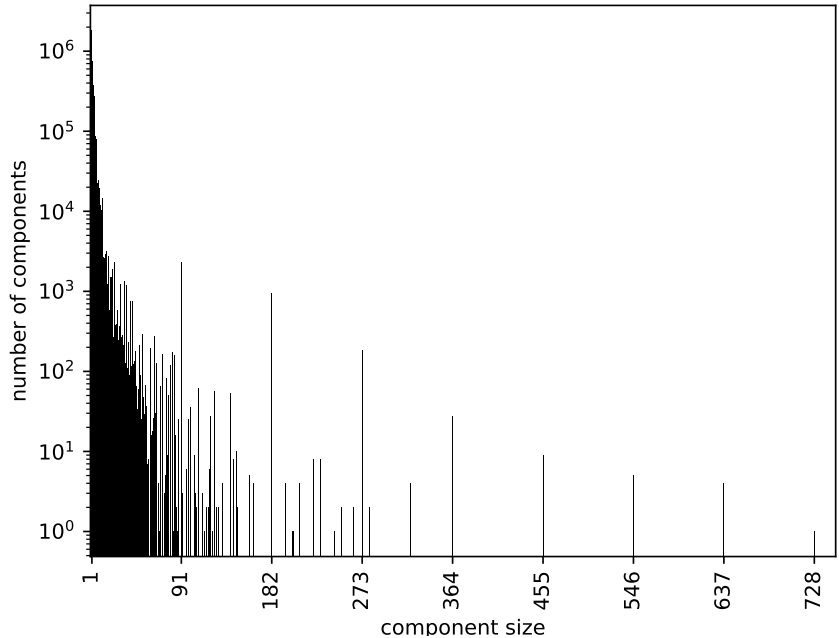}
  \caption{Number of connected components of the SLPCI$^\pm$ graph by size
    in rank 3 at length 15.}
  \label{fig:histo}
\end{figure}

We enumerated $\Aut(\F_r)$ equivalence classes of cyclic subgroup up
to length 16 for $r\leq 4$. 
\fullref{tab:numberofcandidates} shows the resulting number of representatives
of each length.
Lists of these representatives can be found at:

\href{https://www.mat.univie.ac.at/~cashen/orgcensus/}{\nolinkurl{https://www.mat.univie.ac.at/~
  cashen/orgcensus/}}

Our tools for working with free groups and  enumerating equivalence classes are extensions of those
developed with Manning  for \cite{CasMan15}.

\begin{table}[h]
  \centering
  \begin{tabular}[h]{c | c | c | c | c}
      $L$ & $\F_1$&$\F_2$&$\F_3$&$\F_4$\\
           \hline
    1 & 1&0 & 0&0\\
    2 & 1&0 & 0&0\\
    3 & 1&0& 0&0\\
    4 & 1&2&  0&0\\
    5 & 1&3 & 0&0\\
    6 & 1&8 & 1&0\\
    7& 1&12 & 5&0\\
    8& 1&34 & 18&2\\
    9& 1&71 & 98&5\\
    10& 1&217 & 522&35\\
    11& 1&515 & 3,124&315\\
    12& 1&1,423 & 16,866&7,106\\
    13& 1&3,834 & 96,086&93,460\\
    14 &  1&11,816& 582,844&1,124,764\\
    15 & 1&33,321&3,481,458  & 11,679,597\\
    16& 1&95,440&19,514,686  &109,264,221
  \end{tabular}
  \caption{The number of length $L$ $\Aut(\F_r)$ equivalence
    classes of cyclic subgroup not contained in a proper free factor
    of $\F_r$, for $L\leq 16$.}
  \label{tab:numberofcandidates}
\end{table}

\subsection{Computing imprimitivity rank}
We compute imprimitivity rank by inductively building Stallings graphs
$\Gamma$ representing finite rank subgroups $H$ of $\F_r$ containing $w$ as an
imprimitive word.
Since we are interested in minimal rank subgroups containing $w$, we
may assume that the loop labelled by $w$ traverses every edge of
$\Gamma$.
Furthermore, since we are interested in subgroups containing
$w$ as an imprimitive element, we may assume $w$ traverses every edge
at least twice.
In particular, $\Gamma$ can contain at most $\lfloor|w|_a/2\rfloor$
edges labelled $a$ for each basis element $a$.
These constraints cut down on the number of possible graphs $\Gamma$.

\fullref{tab:irank} shows the observed number of equivalence classes
of cyclic subgroup of given
imprimitivity rank at word lengths 14-16.

\begin{table}[h]
  \centering
  \begin{tabular}[h]{c|c | c | c | c | c}
    $L=14$& irank & $\F_1$&$\F_2$&$\F_3$&$\F_4$\\
    \hline
    &1 & 1&12 &5 &0\\
    &2 & 0& 11804&364 &6\\
    &3 & 0&0& 582475&321\\
    &4& 0&0&  0&1124437\\
  \multicolumn{1}{c}{$L=15$}\\
     \hline
    &1 & 1& 3&0 &0\\
    &2 & 0& 33318& 258&7\\
    &3 & 0&0&3481200 &1055\\
       &4 & 0&0&  0&11678535\\
        \multicolumn{1}{c}{$L=16$}\\
           \hline
    &1 & 1&34 & 18&2\\
    &2 & 0&95406 & 2765&111\\
    &3 & 0&0& 19511903&11023\\
    &4 & 0&0&  0&109253085\\
  \end{tabular}
  
  \caption{The number of length $L$ $\Aut(\F_r)$ equivalence
    classes of cyclic subgroup not contained in a proper free factor,
    by rank and imprimitivity rank, for $14\leq L\leq 16$.}
  \label{tab:irank}
\end{table}

Louder and Wilton define  \emph{$w$--subgroups} to be those minimal rank
subgroups containing $w$ as an imprimitive element that are maximal
with respect to inclusion among all such subgroups.
They prove that a word $w$ of imprimitivity rank 2 has a unique $w$--subgroup.
On the other hand, elements of imprimitivity rank $r$ in $\F_r$
obviously have a unique $w$--subgroup, the group $\F_r$ itself.
For intermediate imprimitivity ranks the uniqueness of
$w$--subgroups is an open question.
We observe that all elements in our enumeration have unique $w$--subgroups:
\begin{proposition}\label{uniqueWsubgroups}
  If $w\in \F_4$ has imprimitivity rank 3 and length at most 16 then it
  has a unique $w$--subgroup.
\end{proposition}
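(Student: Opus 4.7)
The plan is a direct computational verification, extending the imprimitivity rank algorithm already used to build \fullref{tab:irank}. For each of the $11023$ representatives $w\in\F_4$ with $L\leq 16$ and imprimitivity rank $3$ produced by the enumeration of \fullref{sec:experiment}, I would enumerate the set $\mathcal{S}(w)$ of all rank-$3$ subgroups $H\leq\F_4$ containing $w$ as an imprimitive element, and verify that the poset $(\mathcal{S}(w),\subseteq)$ has a unique maximal element.

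First I would modify the Stallings-graph construction used to compute imprimitivity rank: instead of halting at the first rank-$3$ witness, continue the search to enumerate every folded, based, connected, $X$-labelled graph $\Gamma$ of rank exactly $3$ in which $w$ labels a loop at $\bp$ traversing every edge at least twice and representing an imprimitive element of $\pi_1(\Gamma,\bp)$. The edge-multiplicity constraint bounds $|E(\Gamma)|$ by $\lfloor L/2\rfloor\leq 8$, and the rank constraint pins $|V(\Gamma)|=|E(\Gamma)|-2$, so the branching tree of partial graphs stays shallow and the enumeration terminates quickly for each $w$.

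Next I would compute the inclusion relation on $\mathcal{S}(w)$. Given two folded based $X$-labelled graphs $\Gamma_1,\Gamma_2$ representing subgroups $H_1,H_2\leq\F_r$, one has $H_1\subseteq H_2$ if and only if there is a based label-preserving immersion $\Gamma_1\to\Gamma_2$; this can be decided by attempting to fold the disjoint union $\Gamma_1\sqcup\Gamma_2$ with basepoints identified and checking whether the image of $\Gamma_2$ is unchanged. Using these inclusion tests, extract the maximal elements of $(\mathcal{S}(w),\subseteq)$ and confirm that there is exactly one in every case. As a sanity check, the output set of maximal elements should of course include some $w$-subgroup, which exists because $w$ itself has finite imprimitivity rank.

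The main obstacle I expect is scale rather than mathematical difficulty. Worst case one has to run $11023$ exhaustive searches and then perform quadratically many pairwise inclusion tests per word. In practice the per-word workload should be comparable to that of the imprimitivity rank computation, since the Stallings graphs involved have at most $8$ edges and $6$ vertices. A useful early-out is that the first time two incomparable candidates in $\mathcal{S}(w)$ are found that are not jointly contained in any larger rank-$3$ element of $\mathcal{S}(w)$, one obtains a counterexample to the proposition and can stop; the computation reports that this situation never arises for $L\leq 16$ in $\F_4$, which is exactly the content of the proposition.
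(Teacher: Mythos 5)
Your proposal matches the paper's approach: the paper gives no separate argument for this proposition but establishes it as a byproduct of the same computation, namely enumerating all rank-3 Stallings graphs in which $w$ labels an imprimitive loop traversing every edge at least twice and observing a unique inclusion-maximal one. Your justification of the edge-multiplicity bound and the immersion-based containment test are exactly the standard facts the paper's implementation relies on, so the proposal is correct and essentially identical in method.
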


Algorithms in this section can be found in
\texttt{imprimitivity\_rank.py} of \href{https://github.com/cashenchris/freegroups.git}{\texttt{github:}cashenchris/freegroups}.

\subsection{Verifying hyperbolicity}
Given an imprimitive, Whitehead minimal word $w\in \F_r$ that is not a
proper power, we check (non)hyperbolicity of $G:=\F_r/\llangle w\rrangle$ using the following
tests:
\begin{enumerate}
\item Check if the presentation is \emph{cyclically pinched}, that is, if it can be
  written as an product of two finite rank free groups
  amalgamated over a cyclic subgroup.
  This is true if a cyclic
  permutation of $w$ can be written as a product $uv$ such that $u$
  and $v$ are nontrivial words with no generators of $\F_r$ in common.
  In this case, $G$ is nonhyperbolic if $u$ and $v$ are both proper
  powers, and hyperbolic otherwise.
If  not cyclically pinched, then\label{item:cyclicallypinched}
\item check if $w$ satisfies the hypotheses of Ivanov and Schupp \cite[Theorem~3 or
  4]{MR1401522}, and if so, whether $G$ is hyperbolic or not. If
  Ivanov-Schupp does not apply, then\label{item:IvanovSchupp}
  \item check if $w$ satisfies one of the small cancellation
    conditions $C(7)$, $C(5)-T(4)$, $C(4)-T(5)$, or $C(3)-T(7)$, in which
    case $G$ is hyperbolic via results of Gersten and Short \cite{GerSho90}. Otherwise,\label{item:smallcancellation}
  \item check if $w$ satisfies the $C'(1/4)-T'$ hyperbolicity condition of
    Blufstein and Minian \cite{BM19}. If not,\label{item:BlufsteinMinian}
   \item check hyperbolicity of $G$ with \texttt{GAP}. Finally, if that
     fails, then\label{item:walrus}
     \item verify hyperbolicity of $G$ with \texttt{kbmag}.\label{item:kbmag}
\end{enumerate}

The algorithm can be found in 
\texttt{geometryofonerelatorgroups.py}
of:

\href{https://github.com/cashenchris/onerelatorgroups.git}{\texttt{github:}cashenchris/onerelatorgroups}

  We remark that the above checks cannot certify a counterexample to the
  Louder-Wilton conjecture, since the only checks that can conclusively return `nonhyperbolic' are 
  \eqref{item:cyclicallypinched} and \eqref{item:IvanovSchupp}.
  It is easy to verify that the nonhyperbolic cyclically pinched case
  implies imprimitivity rank 2, and we checked this for the
  Ivanov-Schupp case in \fullref{prop:ISirank}.
  Thus, the worst that could happen is that we encounter a highly
  imprimitive word whose hyperbolicity we are unable to decide
  with the above tools.
  We did not encounter any such words.
In principle, if $\F_r/\llangle w \rrangle$ hyperbolic, this will be
verified by \texttt{kbmag} \cite{Hol00}, given enough time and
computing resources, but it will run forever in the nonhyperbolic case.
Even in our experiments \texttt{kbmag} took up to several minutes to
succeed, making it unsuitable for checking hundreds
of millions of examples.
Checks \eqref{item:cyclicallypinched}-\eqref{item:walrus} are faster,
but sometimes inconclusive.

Items \eqref{item:cyclicallypinched}-\eqref{item:BlufsteinMinian} we
implemented ourselves.

In item \eqref{item:walrus} we used the function \texttt{IsHyperbolic}
(with parameter $\varepsilon = 1/100$) of the \texttt{GAP} \cite{GAP4}
package \texttt{walrus} \cite{walrus} which is based on an algorithm
of Holt, et al.\
\cite{HolLinNeu19}.
The function tries to verify that the \texttt{RSym} curvature
distribution scheme defined in  \cite{HolLinNeu19}  succeeds on every
van Kampen
diagram over the presentation defined by $w$.
This step is crucial, since although small cancellation words are
generic, there are still far too many words that evade checks
\eqref{item:cyclicallypinched}-\eqref{item:BlufsteinMinian} to
feasibly check with \texttt{kbmag}.
Step \eqref{item:walrus} is based on the second author's investigation of the application of \texttt{RSym} and
its variants to hyperbolicity of one-relator groups \cite{Hof20}. (Another recent application of \texttt{RSym}, to a different class of groups,
was conducted by Chalk \cite{Cha20}.)

The implementation of \texttt{IsHyperbolic} in the version of \texttt{walrus}
we used does not capture the full power of the algorithms described in
\cite{HolLinNeu19}:
\begin{itemize}
\item \texttt{IsHyperbolic} quits and answers inconclusively if it
  encounters certain potential bad van Kampen diagrams, but sometimes
  it can be checked by hand that such a diagram does not really exist.
 \item The \texttt{RSym} algorithm in \cite{HolLinNeu19} takes a \emph{depth}
   parameter $d$. Success for any $d$ implies hyperbolicity.
   \texttt{IsHyperbolic} only implements $d=1$.
 \item \cite{HolLinNeu19}  also defines an enhanced version of
   \texttt{RSym} called $\texttt{RSym}^+$ that is not implemented.
\end{itemize}

The second author showed by hand that the enhanced version of
\texttt{RSym} often succeeds when \texttt{IsHyperbolic} is
inconclusive. For example:
\begin{theorem}[{\cite[Theorem~5.6]{Hof20}}]\label{charlotte}
  If $w\in \F_3$ has imprimitivity rank 3 and length at most 12 then
  $\texttt{RSym}^+$ succeeds at depth 2.
\end{theorem}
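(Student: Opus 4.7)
The plan is to prove this by reducing to a finite check and then analyzing the relevant van Kampen diagrams. First, I would apply the enumeration machinery of \fullref{slpci} to list all SLPCI$^\pm$--minimal representatives of $\Aut(\F_3)$--orbits of cyclic subgroups of $\F_3$ of length at most 12, and then apply the imprimitivity-rank algorithm of the previous subsection to filter this list down to those words $w$ with imprimitivity rank equal to 3. By Tables \ref{tab:numberofcandidates} and \ref{tab:irank} the result is a finite, explicit, and computationally modest collection of words; everything beyond this is a case analysis over that collection.

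For each such $w$, my next step is to set up the object on which \texttt{RSym}$^+$ operates: the \emph{place graph} or \emph{piece graph} associated to the symmetrised closure of the one-relator presentation $\langle x_1,x_2,x_3 \mid w\rangle$, recording all overlaps between cyclic conjugates of $w^{\pm 1}$. The \texttt{RSym} scheme of Holt, Linton and Neunhöffer assigns angles at corners of van Kampen diagrams and attempts to redistribute angular defect so that every interior vertex, every piece, and every face witnesses a strict curvature surplus, with the depth parameter $d$ controlling how far through the diagram redistribution may reach. The plus-version \texttt{RSym}$^+$ additionally exploits constraints coming from the fact that two adjacent faces cannot share more than a single piece without the diagram being reducible. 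At depth $2$ one therefore considers all possible configurations of a face together with its neighbours, and must verify that in each such local pattern the accumulated curvature is bounded above by a negative constant uniform over the diagram.

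The hard part is the combinatorial case analysis for the exceptional words, namely those $w$ for which \texttt{IsHyperbolic} in \texttt{walrus} returns inconclusive at depth $1$; for the remaining words there is nothing to do beyond recording the algorithm's success. For each exceptional $w$, I would enumerate the finite set of depth-$2$ neighbour patterns compatible with the piece graph of $w$, compute their angular contributions under the \texttt{RSym}$^+$ prescription, and confirm that the linear program determining a valid angle assignment is feasible with strictly negative total face curvature. The combinatorial explosion in the number of patterns is the principal obstacle, and it is here that the restriction to $L \leq 12$ is essential: the diameter of the piece graph, and hence the number of depth-$2$ configurations, grows rapidly with $|w|$.

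Finally, I would assemble the per-word verifications into the global statement. Since success of \texttt{RSym}$^+$ at any depth certifies a linear isoperimetric inequality for the presentation, the theorem delivers, as a byproduct, hyperbolicity of $\F_3/\llangle w\rrangle$ for every imprimitivity-rank-$3$ word of length at most $12$, consistent with—and providing independent support for—the outcome of the experimental pipeline described in the previous subsection.
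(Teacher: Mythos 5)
The paper does not actually prove \fullref{charlotte}: the statement is imported wholesale from the second author's thesis \cite{Hof20}, and the surrounding text only describes informally what that reference does (a by-hand verification of $\texttt{RSym}^+$ on the finitely many relevant words). Your outline matches that description in its broad strokes --- enumerate the imprimitivity-rank-3 words of length at most 12 in $\F_3$ via the SLPCI$^\pm$/Whitehead machinery and the imprimitivity-rank algorithm, then verify the curvature scheme word by word --- so the strategy is the right one, and the reduction to a finite check is correct.

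But as written this is a plan, not a proof. For a statement of this kind the entire mathematical content is the execution of the finite check, and you never execute it: you do not produce the list of exceptional words on which \texttt{IsHyperbolic} is inconclusive at depth 1, nor carry out any of the depth-2 configuration analyses. Two further points need attention. First, you assert that for the non-exceptional words ``there is nothing to do beyond recording the algorithm's success''; this silently uses that success of \texttt{RSym} at depth 1 implies success of $\texttt{RSym}^+$ at depth 2, which is plausible from the way the schemes are nested in \cite{HolLinNeu19} but must be stated and justified rather than assumed. Second, your description of the scheme as an angle assignment whose existence is decided by a feasibility linear program is not quite the \texttt{RSym} of \cite{HolLinNeu19}, which is a fixed curvature-redistribution prescription (every face and vertex starts with curvature $+1$, each vertex receives $-1/2$ from each incident edge and shares its resulting deficit equally among adjacent interior faces); what one verifies is that this fixed scheme leaves every interior face with curvature at most $-\varepsilon$ for some uniform $\varepsilon>0$, not that some valid assignment exists. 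These are the details on which the proof would live or die, and they are exactly what \cite{Hof20} supplies.
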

We considered implementing an enhanced \texttt{RSym} algorithm, but it
turned out in our experiments that Checks
\eqref{item:cyclicallypinched}-\eqref{item:walrus} caught enough words
that \texttt{kbmag} could finish off the rest in a reasonable amount of time.

\subsection{Word length 17 and beyond}
We have described the experiments up to length 16.
To extend \fullref{rank3} to length 17 we
altered the algorithm.
It turns out that hyperbolicity checks
\eqref{item:cyclicallypinched}-\eqref{item:walrus} are fast compared
to computing equivalence classes and imprimitivity ranks.
Also, the imprimitivity rank computation can be short-circuited to
give a faster decision of whether the imprimitivity rank is greater
than 2.
For length 17 we enumerated SLPCI$^\pm$ and Whitehead minimal words 
and checked
for hyperbolicity using checks
\eqref{item:cyclicallypinched}-\eqref{item:walrus} first.
If some check answered `hyperbolic' we moved on to the next candidate.
Otherwise, we checked if the imprimitivity rank was equal to 2.
If so, we moved on to the next candidate.
In the remaining cases where hyperbolicity was inconclusive and
imprimitivity rank was greater than 2, then we proceeded to check if
the word was the 
the shortlex minimal generator of a cyclic subgroup in its
$\Aut(\F_r)$ equivalence class, and if so verified hyperbolicity with \texttt{kbmag}.

This still took $\sim\! 4$ years of CPU time.
The problem is completely parallelizable over the words of fixed
length in a free group, so conceivably our programs could be run on a
larger cluster to extend the results to length 18 or 19, if
there were any particular reason to expect that a counterexample would
be revealed at these lengths.
We did have a reason to push as far as length 17: in rank 3 at length at most 12, \texttt{kbmag} is not
necessary---checks \eqref{item:cyclicallypinched}-\eqref{item:walrus} always
succeed in verifying hyperbolicity.
We conjectured, and verified, that the same phenomenon would repeat in
rank 4---checks \eqref{item:cyclicallypinched}-\eqref{item:walrus}
suffice up to length 16, but beginning with length 17 additional
complexity appears that requires \texttt{kbmag}.
This leaves us with the question of whether in rank $r$ all highly
imprimitive words of
length at most $4r$ can be verified hyperbolic using only checks
\eqref{item:cyclicallypinched}-\eqref{item:walrus}, or, similarly to
\fullref{charlotte}, using some enhancement of \texttt{RSym}?
If so, this would improve \fullref{ISbound}.

\subsection{Stable commutator length}
The \emph{commutator length (cl)} of an element in the commutator subgroup
of a group is the minimal number of factors in the expression of that
element as a product of commutators.
The \emph{stable commutator length (scl)} is $\text{scl}(w):=\lim_{n\to\infty}\text{cl}(w^n)/n$.
Heuer \cite[Conjecture~6.3.2]{Heu19} conjectures a generalization of
the Duncan-Howie scl-gap theorem \cite{MR1128707}  saying that
$\text{scl}\geq (\text{irank}-1)/2$.
We confirm Heuer's conjecture on our dataset:

\begin{proposition}\label{scl}
  For all nontrivial $w$ in the commutator subgroup of $\F_4$ with $|w|\leq 16$,
  we have $\text{scl}(w)\geq (\text{irank}(w)-1)/2$.
\end{proposition}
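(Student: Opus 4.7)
The plan is to verify \fullref{scl} computationally, exploiting the dataset already constructed for the rest of the paper and the fact that both imprimitivity rank and stable commutator length are invariants of $\Aut(\F_4)$-orbits (scl is preserved by any automorphism since automorphisms preserve both $[\F_4,\F_4]$ and commutator length, and irank is manifestly $\Aut$-invariant). Hence it suffices to run the check on the list of $\Aut(\F_4)$ representatives of length at most 16 enumerated in \fullref{tab:numberofcandidates}, for which the irank column of \fullref{tab:irank} is already populated.

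First, for each representative $w$ I would discard those not lying in the commutator subgroup, which is detected by computing the abelianization (the $r$-tuple of signed exponent sums of the basis elements) and keeping only those equal to zero. Then I would split the remaining words according to their imprimitivity rank and handle the cases separately. If $\text{irank}(w)=1$, the inequality reduces to $\text{scl}(w)\geq 0$, which holds trivially. If $\text{irank}(w)=2$, the inequality reduces to $\text{scl}(w)\geq 1/2$, which is exactly the Duncan--Howie theorem \cite{MR1128707} for nontrivial elements in the commutator subgroup of a free group; no extra computation is needed.

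The substantive cases are $\text{irank}(w)=3$ and $\text{irank}(w)=4$, where we must certify $\text{scl}(w)\geq 1$ and $\text{scl}(w)\geq 3/2$ respectively. For each such $w$ I would compute $\text{scl}(w)$ exactly using Calegari's rational polyhedral algorithm, as implemented in \texttt{scallop}, which reduces the computation to a linear program on admissible surfaces; rationality and computability of scl in free groups make this practical. One could also use Calegari's symmetric fatgraph improvements or the lower bounds produced by duality with homogeneous quasimorphisms if numerical scl is borderline, but in our dataset the gap should be comfortable.

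The main obstacle is the sheer size of the irank~$\geq 3$ part of the dataset (hundreds of millions of words at length 16), so most of the work is engineering: (i) pre-filter by the abelianization condition, which should cut the list by a large factor since generic words have nonzero exponent sums; (ii) parallelise the scl computation over the remaining words, as the problem decomposes trivially; and (iii) for the $\text{irank}=4$ cases in $\F_4$, which are the most numerous, use the fact that scl is additive under disjoint generator support and bounded below by simple homological estimates to eliminate many examples before invoking the linear program. Encountering an example where $\text{scl}(w) < (\text{irank}(w)-1)/2$ would disprove Heuer's conjecture at length $\leq 16$; we report that no such example arises.
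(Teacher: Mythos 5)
Your proposal is correct and takes essentially the same approach as the paper: \fullref{scl} is established by computing $\text{scl}$ with \texttt{scallop} on the enumerated $\Aut(\F_4)$-orbit representatives of length at most 16 and comparing against the imprimitivity ranks already in the dataset. Your theoretical shortcuts for $\text{irank}(w)\leq 2$ (trivial, resp.\ Duncan--Howie) are a reasonable optimization but do not change the method.
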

We computed stable commutator lengths with \texttt{scallop}
\cite{CalWal14}. The results are shown in \fullref{fig:scl}.

\begin{figure}[h]
  \centering
  \includegraphics[width=\textwidth]{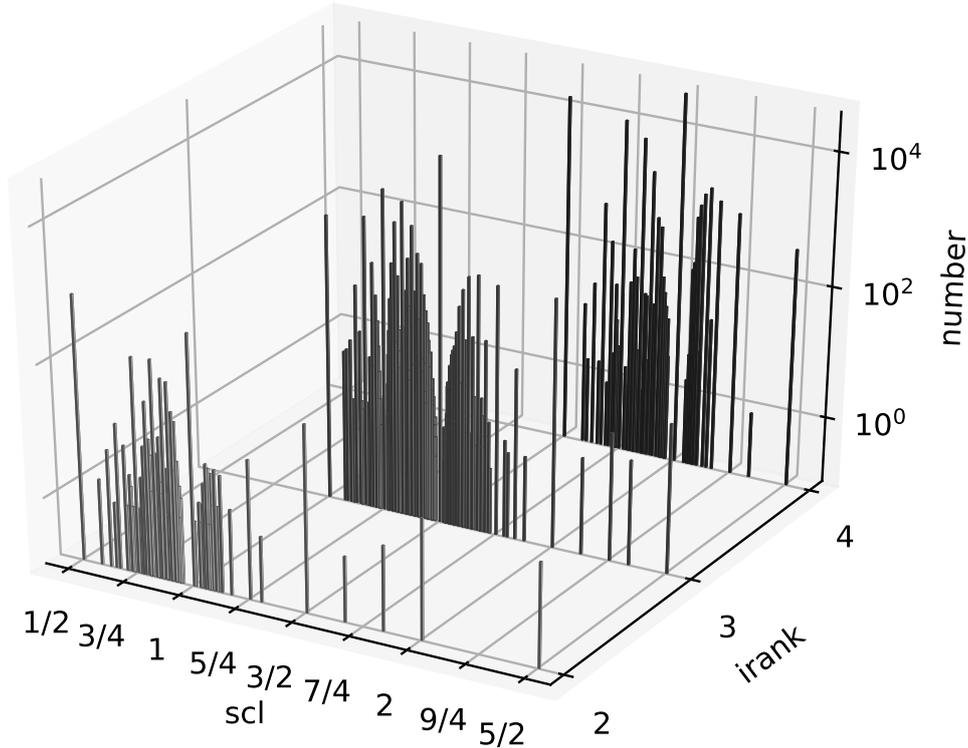}
  \caption{The number of $\Aut(\F_4)$ equivalence classes of cyclic
    subgroups of length at most 16 by stable commutator length and
    imprimitivity rank.}
  \label{fig:scl}
\end{figure}

\bibliographystyle{hypersshort}
\bibliography{orghyp}

\end{document}